\documentclass[10pt,a4paper,twoside,reqno]{amsart}
\usepackage{amsmath,amsthm,amssymb,amsfonts,xspace}
\usepackage[T1]{fontenc}
\usepackage[utf8]{inputenc}
\usepackage[english]{babel}
\usepackage{amsxtra}
\usepackage{enumerate}
\usepackage{verbatim}
\usepackage{color}
\pagestyle{headings}
\usepackage[mathscr]{eucal}

\usepackage[bookmarksnumbered,colorlinks]{hyperref}
\usepackage[colorinlistoftodos,italian]{todonotes}
\usepackage[normalem]{ulem}
\usepackage{tensor}

\newcommand{\N}{\mathbb N}

\newcommand{\R}{\mathbb R}

\newtheorem{Theorem}{Theorem}
\newtheorem{Lemma}{Lemma}

\newtheorem{Proposition}{Proposition}
\theoremstyle{definition}
\newtheorem{Remark}{Remark}

\DeclareMathOperator*{\dive}{div}

\def\bal#1\eal{\begin{align}#1\end{align}}              
\def\baln#1\ealn{\begin{align*}#1\end{align*}}          
\def\bml#1\eml{\begin{multline}#1\end{multline}}        
\def\bmln#1\emln{\begin{multline*}#1\end{multline*}}  
\def\bga#1\ega{\begin{gather}#1\end{gather}}
\def\bgan#1\egan{\begin{gather*}#1\end{gather*}}

\newcommand{\beq}{\begin{equation}}
\newcommand{\eeq}{\end{equation}}
\newcommand{\bere}{\begin{Remark}}
\newcommand{\ere}{\end{Remark}}
\newcommand{\bpr}{\begin{Proposition}}
\newcommand{\epr}{\end{Proposition}}

\begin{document}

\title[Harmonic coordinates and regularity  of Berwald  metrics]{Harmonic coordinates for the nonlinear Finsler Laplacian and some regularity results for Berwald metrics}
\author[E. Caponio]{Erasmo Caponio}
\address{Department of Mechanics, Mathematics and Management, \hfill\break\indent
	Politecnico di Bari, Via Orabona 4, 70125, Bari, Italy}
\email{caponio@poliba.it}

\author[A. Masiello]{Antonio Masiello}
\address{Department of Mechanics, Mathematics and Management, \hfill\break\indent
	Politecnico di Bari, Via Orabona 4, 70125, Bari, Italy}
\email{antonio.masiello@poliba.it}


\begin{abstract}
We prove existence of harmonic coordinates for the nonlinear  Laplacian of a Finsler manifold and apply them in a proof of the Myers-Steenrod theorem for Finsler manifolds. Different from the Riemannian case, these coordinates are not suitable for  studying optimal  regularity of the fundamental tensor; nevertheless, we obtain some partial results in this direction when the Finsler metric is Berwald.
\end{abstract}
\maketitle

\section{Introduction}
The existence  of harmonic coordinates 
is well-known  both on Riemannian and Lorentzian manifolds. Actually, apart from isothermal coordinates on a surface, the problem of existence of harmonic coordinates on a Lorentzian manifold ({\em wave harmonic}) was considered before the Riemannian case. Indeed,  A. Einstein himself, T. De Donder and C. Lanczos considered  harmonic coordinates  in the study of  the Cauchy problem for the Einstein field equations,  cf. \cite{Choque15}.
In a Riemannian manifold with a smooth metric, a proof of the existence of harmonic coordinates  is given  in  \cite[Lemma 1.2]{DeTKaz81} but actually   it can be found  in the work of other authors as  \cite{Muelle70} or  \cite[p. 231]{BeJoSc64}. 
The motivation  to consider harmonic coordinates  comes from  fact that the expression of the Ricci tensor in such coordinates simplifies highly (see the introduction in \cite{DeTKaz81}).  

Different from the Riemannian case, the Finsler Laplacian  is a quasi-linear operator and, although it is uniformly elliptic with smooth coefficients where $du\neq 0$, the lack of definition of the coefficients on the set where $du=0$ makes  the analogous Finslerian problem not completely similar to the Riemannian one.   In a 
recent  paper,  T. Liimatainen  and M. Salo \cite{LiiSal14} considered the problem of existence of ``harmonic'' coordinates for  non-linear degenerate elliptic operators on Riemannian manifold,   including the $p$-Laplace operator. 
We will  show that this  result extends to  the Finslerian Laplace operator as well:
\begin{Theorem}\label{main}
	Let  $(M, F, \mu)$ be a smooth Finsler manifold  of dimension $m$ endowed with a smooth volume form $\mu$, such  that $F\in C^{k+1}(TM\setminus 0)$,  $k\geq  2$, (resp. $F\in C^{\infty}(TM\setminus 0)$; $M$ is endowed with an analytic structure, $\mu$ is also analytic and  $F \in C^{\omega}(TM\setminus 0)$). Let  $p\in M$ then  there exists a neighbourhood $V$ of $p$ and a map $\Psi:V\to \Psi(V)\subset \R^m$ such that $\Psi=(u^1,\ldots, u^m)$ is a $C^{k-1,\alpha}$, $\alpha\in(0,1)$ depending on $V$,  (resp. $C^\infty$; analytic) diffeomorphism and  $\Delta u^i=0$, for all $i=1,\ldots, m$, where $\Delta$ is the nonlinear Laplacian  operator associated to $F$.	
\end{Theorem}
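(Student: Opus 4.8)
The plan is to realize the harmonic coordinates as a small perturbation of an arbitrary background chart, keeping the solution throughout inside the region where its differential does not vanish, so that the degeneracy of $\Delta$ on the set $\{du=0\}$ is never felt. Fix a chart $x=(x^1,\dots,x^m)$ of the prescribed regularity centred at $p$ with $x(p)=0$, and work on a small coordinate ball $B_r=\{|x|<r\}$. Since the covectors $dx^i$ are linearly independent and nowhere zero on $B_r$, the Finsler Laplacian behaves, on functions whose differential is $C^1$-close to $dx^i$, as a genuinely uniformly elliptic quasilinear operator. Writing $g^{*il}(x,\xi)$ for the dual fundamental tensor evaluated at the covector $\xi$, a direct computation of $\Delta u=\tfrac1\sigma\dive\big(\sigma\,g^{*}(x,du)\,du\big)$ (with $\mu=\sigma\,dx$) puts it in the form $\Delta u=A^{il}(x,du)\,\partial^2_{il}u+b(x,du)$, with principal coefficients $A^{il}(x,\xi)=g^{*il}(x,\xi)+\xi_j\,\partial_{\xi_l}g^{*ij}(x,\xi)$ that are positive-definite for $\xi\neq0$ by the fibrewise strong convexity of $F^{*2}$. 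I stress already here that $A^{il}$ carries one $\xi$-derivative of $g^{*}$, hence is only $C^{k-2}$ although $g^{*}$ is $C^{k-1}$; this is the origin of the derivative loss in the final regularity.

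Next I would seek $u^i=x^i+v^i$ with $\Delta u^i=0$ and $v^i=0$ on $\partial B_r$, and construct $v^i$ perturbatively. Consider the nonlinear map $\Phi^i(v)=\Delta(x^i+v)$; its differential at $v=0$ is a linear operator $L^i v=A^{il}(x,dx^i)\,\partial^2_{il}v+(\text{first order})$ which, crucially, carries no zeroth-order term, because $\Delta$ depends on $u$ only through $du$ and $d^2u$. Thus $L^i$ is uniformly elliptic with Hölder coefficients and no zeroth-order part, so by the maximum principle and linear Schauder theory the Dirichlet problem $L^iv=f$, $v|_{\partial B_r}=0$ is uniquely solvable and $L^i\colon C^{2,\alpha}_0(\overline{B_r})\to C^{0,\alpha}(\overline{B_r})$ is an isomorphism. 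Since $D\Phi^i(0)=L^i$ is invertible, an implicit–function/contraction argument (following the scheme of Liimatainen–Salo \cite{LiiSal14}) produces, for $r$ small, a unique small solution $v^i$ with $\|v^i\|_{C^{1,\alpha}}\to0$ as $r\to0$; here the regularity $F\in C^{k+1}$, $k\ge2$, enters to guarantee that the Nemytskii operators built from $g^{*}$ and its first $\xi$-derivative act suitably on these spaces. For the borderline regularity the same solution is obtained variationally: $u^i$ is the unique minimizer of the convex energy $\tfrac12\int_{B_r}F^{*}(du)^2\,d\mu$ under the boundary condition $u^i=x^i$ on $\partial B_r$, and an energy comparison forces $u^i\to x^i$ in $C^1$ as $r\to0$.

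It remains to extract the coordinate and regularity conclusions. The $C^1$-smallness of $v^i$ keeps $du^i=dx^i+dv^i$ close to $dx^i$, so the $du^i$ stay nowhere vanishing and linearly independent on $B_r$; by the inverse function theorem $\Psi=(u^1,\dots,u^m)$ is a diffeomorphism onto its image, and by construction $\Delta u^i=0$. Because each $du^i\neq0$, the equation stays non-degenerate with coefficients inheriting the regularity of $F$, and one may run a Schauder bootstrap: freezing $A^{il}(x,du^i)$ and applying interior estimates raises the regularity one order at a time. The count saturates at $C^{k-1,\alpha}$, not $C^{k,\alpha}$, for two compounding reasons: $A^{il}$ is only $C^{k-2}$, and a $C^{k-2}$ coefficient (top derivative merely continuous) embeds into $C^{k-3,\alpha}$ for any $\alpha\in(0,1)$, so Schauder returns $C^{(k-3)+2,\alpha}=C^{k-1,\alpha}$; the endpoint $k=2$ is exactly the $C^{1,\alpha}$ regularity of quasilinear equations with non-vanishing gradient. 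In the $C^\infty$ case the bootstrap never saturates and yields smoothness, and in the analytic case one invokes Morrey's analytic elliptic regularity for the non-degenerate analytic quasilinear equation to obtain real-analytic $u^i$.

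The main obstacle is precisely the degeneracy of $\Delta$ on $\{du=0\}$: there the operator loses ellipticity and the regularity theory collapses, and direct solvability of $\Delta u^i=0$ is problematic. The perturbative scheme circumvents this by confining the solution to the non-vanishing-gradient region where $\Delta$ is a standard uniformly elliptic quasilinear operator. The remaining technical care lies in verifying the $C^1$-dependence of $\Phi^i$ on the Hölder spaces (or, equivalently, the closeness-to-$x^i$ estimate in the variational formulation) and in the exact bookkeeping of the derivative loss coming from the term $\xi_j\,\partial_{\xi_l}g^{*ij}$ in $A^{il}$, which pins the optimal regularity attainable by this method at $C^{k-1,\alpha}$.
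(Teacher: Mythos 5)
Your overall plan (solve Dirichlet problems with the affine data $x^i$ on a small ball, show the solutions are $C^1$-close to $x^i$ so their differentials never vanish near the centre, then bootstrap with Schauder/Morrey) is the same as the paper's, and your endgame (nonvanishing, independent $du^i$, inverse function theorem, saturation of the bootstrap at $C^{k-1,\alpha}$) is fine. But the central step --- producing the solution \emph{and} the $C^1$-closeness --- has a genuine gap in both of the routes you sketch. In the implicit-function-theorem route you linearize $\Phi^i(v)=\Delta(x^i+v)$ at $v=0$ and use invertibility of $L^i=D\Phi^i(0)$; however the IFT only solves $\Phi^i(v)=w$ for $w$ \emph{near} $\Phi^i(0)=\Delta x^i$, and $\Delta x^i$ is not small: its $C^{0,\alpha}(\overline{B_r})$ norm tends to $|\Delta x^i(p)|\neq 0$ in general as $r\to 0$, while the surjectivity radius of the IFT stays of fixed size. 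The missing ingredient is precisely the rescaling used in the paper (and in both sources you cite): setting $\tilde u^i(\tilde x)=\tfrac1\epsilon u^i(\epsilon\tilde x)$ and $\mathcal A_\epsilon(\tilde x,\omega)=\mathcal A(\epsilon\tilde x,\omega)$ transfers the problem to the unit ball where the operator converges to the frozen, constant-coefficient one, for which the affine functions are \emph{exactly} harmonic; only then does the error term become $O(\epsilon)$ and smallness become available. Moreover, at the borderline $k=2$ the map $\Phi^i$ is not $C^1$ between Schauder spaces (the coefficients $\partial_\omega\mathcal A(\cdot,e_i)$ are merely continuous), so the IFT route is unavailable there, as you half-acknowledge.

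Your variational fallback then asserts that ``an energy comparison forces $u^i\to x^i$ in $C^1$ as $r\to 0$,'' but this is exactly the hard part of the theorem and energy comparison alone cannot deliver it: monotonicity plus testing the equation with $u^i-x^i$ gives only an $L^2$ bound on $D(u^i-x^i)$ (after rescaling, $\|D\tilde v^i\|_{L^2(B_1)}\leq C\epsilon$), and $L^2$-smallness of the gradient does not imply $C^1$-smallness. The paper closes this gap with two ingredients you never supply: (i) $C^{1,\alpha}$ estimates for weak solutions of the \emph{degenerate} quasilinear equation (the solution may a priori have critical points, so standard Schauder theory does not apply), holding \emph{uniformly} in $\epsilon$ because the structural conditions \eqref{uno}--\eqref{tre} are scale-invariant; and (ii) the interpolation lemma \cite[Lemma A.1]{LiiSal14}, which converts $L^2$-smallness plus a uniform $C^{0,\alpha}$ bound on $D\tilde v^i$ into $L^\infty$-smallness on a smaller ball. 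Without (i) and (ii) the claim that $du^i$ stays close to $dx^i$ is unsupported, and with them your argument essentially becomes the paper's. A minor further point: in your quasilinear form the term $\xi_j\partial_{\xi_l}g^{*ij}(x,\xi)$ vanishes identically by Euler's theorem (since $\tfrac12\partial_{\omega_i}F^{*2}=g^{*ij}\omega_j$), so $A^{il}=g^{*il}(x,du)$ exactly, as in \eqref{linear}; your regularity count $C^{k-2}$ for the coefficients is still correct, but because $g^*$ itself is $C^{k-2}$ (two $\omega$-derivatives of $F^{*2}\in C^k$), not $C^{k-1}$ as you state.
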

Theorem~\ref{main} is proved in Section~\ref{harmonic}, where we also show (Proposition~\ref{MyersSteenrod}) that  harmonic (for the Finsler non-linear Laplacian) coordinates can be used to prove the Myers-Steenrod theorem about regularity of  distance preserving bijection between Finsler manifolds. In the Riemannian case, this was established first by  M. Taylor  in \cite{Taylor06}.

For  a semi-Riemannian metric $h$,  
the expression  in local coordinates of the Ricci tensor is given by: 
\[\big(\mathrm{Ric}(h)\big)_{ij}=-\frac1 2 h^{rs}\frac{\partial^2 h_{ij}}{\partial x^r\partial x^s}+\frac 1 2\left(h_{ri}\frac{\partial H^r}{\partial x^j}+h_{rj}\frac{\partial H^r}{\partial x^i}\right)+\text{\it lower order terms},\]
(see \cite[Lemma 4.1]{DeTKaz81}) where $H^r:=h^{ij}H^r_{ij}$,  and  $H^r_{ij}$ are the Christoffel symbols of $h$. As recalled above, in harmonic coordinates the higher order terms in this expression  simplify to 
$-\frac1 2 h^{rs}\frac{\partial^2 h_{ij}}{\partial x^r\partial x^s}$
because  \beq\label{gammah}H^r=0\eeq
in such  coordinates. When $h$ is  Riemannian,  by   regularity  theory for elliptic  PDEs system  \cite{DouNir55, Morrey58}, this observation leads to optimal regularity results for the components of the metric $h$,  once  a certain level of regularity of the Ricci tensor is known \cite{DeTKaz81}.
Roughly speaking,  the fact that the nonlinear Laplacian is a differential operator on $M$ while the fundamental tensor and the components of any Finslerian  connection are objects defined  on $TM\setminus 0$,   harmonic coordinates for the nonlinear Finsler Laplacian do not give such type of information (see  Remark~\ref{horizontal}).
Nevertheless, in Section \ref{Berwald}, we will consider these type of problems  for Berwald metrics and we will obtain some partial result in this direction.
\section{About the nonlinear Finsler Laplacian}
Let $M$ be a smooth (i.e. $C^{\infty}$), oriented,  manifold of dimension $m$ and let us denote by $TM$ and $TM\setminus 0$, respectively,  the tangent bundle and the slit tangent bundle of $M$, i.e. $TM\setminus 0:= \{v\in TM: v\neq 0\}$. A Finsler metric on $M$ is a non-negative function on $TM$ such that for any $x\in M$, $F(x,\cdot)$ is a strongly convex Minkowski norm on $T_xM$, i.e. 
\begin{itemize}
	\item $F(x,\lambda v)=\lambda F(x,v)$ for all $\lambda >0$ and $v\in TM$, $F(x,v)=0$ if and only if $v=0$;
	\item the bilinear symmetric form on $T_x M$, depending on $(x,v)\in TM\setminus 0$,
	\beq\label{fundtens} g(x,v)[w_1,w_2]:=\!\frac 1 2\frac{\partial^2}{\partial s\partial t}F^2(x, v+sw_1+tw_2)|_{(s,t)= (0,0)}\eeq is positive definite  for all $v\in T_xM\setminus \{0\}$ and it is called the {\em fundamental tensor} of $F$.  
\end{itemize} 
Let us recall  that a function defined in some open subset $U$ of $\R^m$ is of class $C^{k,\alpha}$, for $k\in \N$ and $\alpha\in (0,1]$,  (resp. $C^\infty$; $C^\omega$), if all its derivative up to order $k$ exist and are continuous in $U$ and its $k$-th derivatives are H\"older continuous in $U$ with exponent $\alpha$ (resp. if the derivatives of any order exists and are continuous in $U$; if it is real analytic in $U$).

We  assume that $F \in C^{k+1}(TM\setminus 0)$, where $k\in \N$, $k\geq 2$  (resp. $F \in C^{\infty}(TM\setminus 0)$; $F \in C^{\omega}(TM\setminus 0)$, provided that $M$ is endowed with an analytic structure) in the natural charts of  $TM$ associated to  an atlas of $M$; it is easy  to prove, by using $2$-homogeneity of the function $F^2(x,\cdot)$, that  $F^2$ is $C^1$ on  $TM$ with Lipschitz derivatives on  subsets of $TM$ of the type $K\times \R^m$, with $K$ compact in $M$. 

Let us consider the function 
\[F^*\colon T^*M\to [0,+\infty), \quad\quad F^*(x,\omega):=\max_{\substack{v\in T_xM\\ F(v)=1}}\omega(x)[v],\]
which is  a {\em co-Finsler} metric, i. e. for any  $x\in M$,  $ F^*(x,\omega)$ is a Minkowski norm on $T^*_xM$. 
It is well-known (see, e.g.,  \cite[p. 308]{Shen04}) that $F=F^*\circ \ell$, where $\ell:TM\to T^*M$ is the Legendre map:
\[\ell(x,v)=\left(x,\frac 1 2 \frac{\partial}{\partial v} F^2(x,v)[\cdot]\right),\]
and  $\frac{\partial}{\partial v} F^2(x,v)[\cdot]$ is the vertical derivative of $F^2$ evaluated at $(x,v)$, $\frac{\partial}{\partial v} F^2(x,v)[u]:=\frac{d}{dt} F(v+tu)|_{t=0}$. Thus, $F^*=F\circ\ell^{-1}$, and $F^*\in C^k(T^*M\setminus 0)$ (resp. $F^*\in C^\infty(T^*M\setminus 0)$; $F^*\in C^\omega(T^*M\setminus 0)$). 
Its  fundamental tensor,   obtained as in \eqref{fundtens},  will be denoted by $g^*$.
The components $\left(g^*\right)^{ij}(x,\omega)$ of $g^*(x,\omega)$, $(x,\omega)\in T^*M\setminus 0$,  in natural  local coordinate of $T^*M$ define a square matrix which is the inverse of the one defined by the components  $g_{ij}\big(\ell^{-1}(x,\omega)\big)$ of $g\big(\ell^{-1}(x,\omega)\big)$.

Henceforth, we will often omit the dependence on $x$ in  $F(x,v)$, $F^*(x,\omega)$, $g(x,v)$, $g^*(x,\omega)$, etc.  (which is implicitly carried on by vectors or covectors) writing  simply $F(v)$, $F^*(\omega)$, $g_v$, $g^*_\omega$, etc. 

For a differentiable function    $f\colon  M\to \R$,  the {\em gradient} of $f$ is defined as $\nabla f:=\ell^{-1}(df)$. Hence, $F(\nabla f)=F^*(df)$ and, wherever $df\neq 0$, $df=\ell(\nabla f)=g_{\nabla f}(\nabla f, \cdot)$. 

Given a smooth  volume form $\mu$ on $M$, $\mu$ locally given as $\mu=\sigma dx^1\wedge\ldots\wedge dx^m$, the {\em divergence} of a vector field $x\in \mathfrak X(M)$ is defined as the function $\dive (X)$ such that $\dive (X)\mu=\mathfrak L_X \mu$, where $\mathfrak L$ is the Lie derivative; in local coordinates this is the function 
$\frac{1}{\sigma}\partial_{x^i}(\sigma X^i)$. 
The Finslerian Laplacian of a smooth function on $M$ is then defined as $\Delta f:= \dive(\nabla f)$, thus in local coordinates it is given by
\[\Delta f=\frac{1}{\sigma} \partial_{x^i} \left(\sigma\frac 1 2\frac{\partial F^{*2}}{\partial \omega_i}(df)\right),\]
where $(x^i,\omega_i)_{i=1,\ldots,m}$ are natural local coordinates of $T^*M$ and the Einstein summation convention has been used.
Notice that, wherever $df\neq 0$, $\Delta f$ is equal to 
\[
\Delta f=\frac{1}{\sigma} \partial_{x^i} \left(\sigma (g^*_{df})^{ij}\partial_{x^j} f\right),\] 
thus  $\Delta$ is a quasi-linear operator and when $F$ is the norm of a Riemannian metric $h$ and $\sigma=\sqrt{\det h}$ it becomes linear and equal to the Laplace-Beltrami operator of $h$.

Let $\Omega\subset M $ be an open relatively compact subset  with smooth  boundary and let  $H^k_{\mathrm{loc}}(\Omega)$, $k\in\N\setminus \{0\}$, be   the Sobolev space of  functions defined on $\Omega$ that are  of $H^k$ class on the open subsets $\Omega'$ with compact closure contained in $\Omega$, $\Omega'\subset \subset \Omega$ ($H^k_{\mathrm{loc}}(\Omega)$  can be defined only in terms of the differentiable  structure of $M$, see  \cite[\S 4.7]{Taylor11}); let us also denote by $H^1(\Omega)$ and $H^1_0(\Omega)$ the usual Sobolev spaces on a smooth  compact manifold with boundary (see \cite[\S 4.4--4.5]{Taylor11}).

Let  $E(u):=\frac 1 2\int_\Omega (F^{*2}(du)d\mu=\frac 1 2\int_\Omega F^2(\nabla u)d\mu\in [0,+\infty)$ be  the Dirichlet functional of $(M,F)$. Let $\varphi\in H^1(\Omega)$, then the critical points $u$ of $E$ on $\{\varphi\}+H^1_0(\Omega)$  are the weak solutions of 
\beq\begin{cases}
	\Delta u=0&\text{in } \Omega\\
	u=\varphi&\text{on }\partial\Omega
\end{cases}\label{dirichlet}\eeq
i.e.  for all $\eta\in H^1_0(\Omega)$ it holds:
\[\frac 12 \int_\Omega \frac{\partial F^{*2}}{\partial \omega}(du)[d\eta]d\mu=0, \quad\quad \text{$u-\varphi\in H^1_0(\Omega)$.}\]
Let $(V,\phi)$ be a coordinate system in $M$, with $\phi:V\subset M \to U\subset \R^m$, $\phi(p)=(x^1(p), \ldots, x^m(p))$ and $\bar U$ compact. In the coordinates $(x^1, \ldots, x^m)$, (up to the factor $1/\sigma$), the equation $\Delta u=0$ corresponds to  $\dive (\mathcal A(x, Du)=0$, where 
$\mathcal A\colon U\times \R^m\to \R^m$ is the map whose components are given by $\mathcal A^i(x,\omega):=\sigma(x)\frac 1 2\frac{\partial F^{*2}}{\partial \omega_i}(\omega)$ and $Du$ is the vector whose components are $(\partial_{x^1}u, \ldots,\partial_{x^m}u)$. Observe that $\mathcal A\in C^0(\bar U\times \R^m)\cap C^{k-1}(\bar U\times\R^m\setminus\{0\})$ (resp. $\mathcal A\in C^0(\bar U\times \R^m)\cap C^{\infty}(\bar U\times\R^m\setminus\{0\})$; $\mathcal A\in C^0(\bar U\times \R^m)\cap C^{\omega}(\bar U\times\R^m\setminus\{0\})$).
Thus, Finslerian harmonic functions are locally $\mathcal A$-harmonic in the sense of \cite{LiiSal14}. We notice that $\mathcal A$ satisfies the following  properties: there exists $C\geq 0$ such that
\begin{itemize}
	\item[1.]  for all $(x,\omega)\in U\times \R^m\setminus\{0\}$:
	\beq\|\mathcal A(x,\omega)\|+\|\partial_x\mathcal A(x,\omega)\|+\|\omega\|\,\|\partial_\omega \mathcal A(x,\omega)\|\leq C\|\omega\|;\label{uno}\eeq
	\item[2.] 
	for all $(x,\omega)\in U\times(\R^m\setminus \{0\})$ and all $h\in \R^m$:
	\beq\partial_\omega\mathcal A(x,\omega)[h,h]\geq \frac 1 C \|h\|^2;\label{due}\eeq
	\item[3.] for all $x\in U$ and $\omega_1, \omega_2\in \R^m$:
	\beq \big(\mathcal A(x,\omega_2)-\mathcal A(x,\omega_1)\big)(\omega_2-\omega_1)\geq \frac 1 C \|\omega_2-\omega_1\|^2;\label{tre}\eeq
\end{itemize}
in particular, \eqref{tre} comes from strong convexity of $F^{*2}$ on $TM\setminus 0$ (i.e. by \eqref{due}) and by a continuity argument when $\omega_2=-\lambda \omega_1$, for some $\lambda>0$.

By the theory of monotone operators or by a minimization argument based on the fact that $E$ satisfies the Palais-Smale condition
(see \cite[p.729-730]{GeShe01}) we have that for all $\varphi\in H^1(\Omega)$, there exists a minimum of $E$ on $\{\varphi\}\times H^1_0(\Omega)$ which is then a weak solution 
of \eqref{dirichlet}.

Now, as in \cite{GeShe01} or \cite{OhtStu09},  the following Proposition holds:
\bpr
Any weak  solution of \eqref{dirichlet}    belongs to $H^2_{\rm loc}(\Omega)\cap C^{1,\alpha}_{\rm loc}(\Omega)$, for some $\alpha\in (0,1)$. 
\epr
\bere The H\"older constant $\alpha$ in the above proposition depends on the  open relatively compact subset $\Omega'\subset \subset\Omega$ where $u$ is seen as a local weak solution of $\Delta u=0$, i.e. 
\[\frac 12 \int_\Omega \frac{\partial F^{*2}}{\partial \omega}(du)[d\eta] d\mu=0, \quad\quad \text{for all $\eta\in C^{\infty}_c(\Omega')$.}\]
\ere

From the above proposition and classical results for uniformly elliptic operators,  we can obtain higher regularity,   where $du\neq0$. In fact, if $du\neq 0$ on an open subset $U\subset \subset \Omega$  then, 
being $u\in H^2(U)$, the equation $\Delta u=0$ is equivalent to 
\beq \label{linear}\sum_{i,j=1}^m(g^*_{du})^{ij}\frac{\partial^2u}{\partial x^i\partial x^j}=-\sum_{i=1}^{m}\left(\frac{1}{ \sigma}\frac{\partial\sigma}{\partial x^i} \frac 1 2\frac{\partial F^{*2}}{\partial \omega_i}(du)+\frac 12 \frac{\partial^2F^{*2}}{\partial x^i\partial \omega_i}(du)\right), \quad \text{a.e. on $U$.}\eeq
This can be interpreted as a linear elliptic equation: 
\[a^{ij}(x)\partial_{x^ix^j}u=f(x),\]
where $a^{ij}(x):=(g^*_{du(x)})^{ij}$ and 
 \[f(x)=-\sum_{i=1}^{m}\left(\frac{1}{ \sigma(x)}\frac{\partial\sigma}{\partial x^i}(x) \frac 1 2\frac{\partial F^{*2}}{\partial \omega_i}(du(x))+\frac 12 \frac{\partial^2F^{*2}}{\partial x^i\partial \omega_i}(du(x))\right).\]
Thus, when  $k\geq 3$, the coefficients  $a^{ij}$ and $f$ are at least $\alpha$-H\"older continuous and then  $u\in C^{2,\alpha}(U)$; by a bootstrap argument, we  then get the  following proposition (see, e.g. \cite[Appendix J, Th. 40]{Besse08}):
\bpr\label{higherreg}
Let  $U\subset \subset\Omega$ be an open subset such that $du\neq 0$ on $U$ then  any weak solution  of \eqref{dirichlet}  belongs to $C^{k-1,\alpha}(U)$, for some $\alpha$ depending on $U$; moreover it is $C^\infty(U)$ (resp. $C^\omega(U)$) if $k=\infty$, (resp. if $M$ is endowed with an analytic structure,  $k=\omega$,  and $\sigma$ is also analytic).
\epr	
\section{Harmonic coordinates in Finsler manifolds}\label{harmonic}
Let us consider a smooth   atlas of the manifold  $M$ and a point $p\in M$.  Let $(U,\phi)$ be a chart of the atlas, with components  $(x^1, \ldots,x^m)$, such that $p\in U$, $\phi(p)=0$ and let us assume that the open ball $B_\epsilon (0)$, for some $\epsilon\in(0,1)$, is contained in $ \phi(U)$. \begin{proof}[Proof of Theorem~\ref{main}]
	Let $(u^i)_{i\in\{1,\ldots,m\}}$ be weak  solutions of the $m$ Dirichlet  problems:
	\[\begin{cases}
	\dive \big(\mathcal A(x, Du^i)\big)=0&\text{in } B_\epsilon(0)\\
	u^i=x^i&\text{on }\partial B_\epsilon(0)
	\end{cases}\]
	Following \cite[\S 3.9]{Taylor00} and \cite[Theorem 2.4]{LiiSal14}, we can re-scale the above problems by considering $\tilde u^i(\tilde x):=\frac{1}{\epsilon}u^i(\epsilon \tilde x)$ and $\mathcal A_\epsilon(\tilde x, \omega):=\mathcal A(\epsilon \tilde x, \omega)$, so that the problems are transferred on $B_1(0)$  with Dirichlet data $\tilde x^i:=\frac{x^i}{\epsilon}$. Notice that $\mathcal A_\epsilon$ satisfies \eqref{uno}--\eqref{tre} uniformly w.r.t. $\epsilon\in (0,1)$. Hence, there exists a solution $\tilde u^i\in H^1\big(B_1(0)\big)$ of
	\beq\begin{cases}
		\dive \big(\mathcal A_\epsilon(\tilde x, D\tilde u^i)\big)=0&\text{in } B_1(0)\\
		\tilde u^i=\tilde x^i&\text{on }\partial B_1(0)
	\end{cases}\label{direps}\eeq
	Moreover, there exists $\delta \in (0,1)$ such that $\|\tilde u^i\|_{C^{1,\alpha}\big(B_\delta(0)\big)}\leq C_1$, for all $i\in\{1,\ldots,m\}$, uniformly w.r.t. $\epsilon$  (notice  that, since $\alpha$ depends only on $m,\ C, \ \delta $ and $\|\tilde u^i\|_{H^1\big(B_1(0)\big)}$, which is uniformly bounded w.r.t. to $\epsilon$,   $\alpha$  is  independent of $\epsilon$ as well).
	
	Let us denote $\tilde x^i$ by $\tilde u^i_0$ and let $\tilde v^i:=\tilde u^i-\tilde u^i_0$. From \eqref{tre}, we have
	\[\int_{B_1(0)}|D\tilde v^i|^2d\tilde x\leq C\int_{B_1(0)}\big(\mathcal A_\epsilon(\tilde x,D\tilde u^i)-\mathcal A_\epsilon(\tilde x, D\tilde u^i_0)\big)(D\tilde v^i)d\tilde x.\]
	Recalling that $\tilde u^i$ solves \eqref{direps} and using $\tilde v^i$ as a test function, we obtain
	\baln
	\lefteqn{\int_{B_1(0)}\big(\mathcal A_\epsilon(\tilde x,D\tilde u^i)-\mathcal A_\epsilon(\tilde x, D\tilde u^i_0)\big)(D\tilde v^i)d\tilde x=}&\\
	&-\int_{B_1(0)}\mathcal A_\epsilon(\tilde x, D\tilde u^i_0)(D\tilde v^i)d\tilde x=\\
	&-\int_{B_1(0)}\big(\mathcal A_\epsilon(\tilde x,D\tilde u^i_0)-\mathcal A_0(0, D\tilde u^i_0)\big)(D\tilde v^i)d\tilde x
	\ealn
	where the last equality is a consequence of being $\tilde u^i_0$, $\mathcal A_0$-harmonic. As $\mathcal A$ is locally Lipschitz and $D\tilde u^i_0$ is a constant vector,  using also H\"older's inequality,  we then get 
	\[-\int_{B_1(0)}\big(\mathcal A_\epsilon(\tilde x,D\tilde u^i_0)-\mathcal A_0(0, D\tilde u^i_0)\big)(D\tilde v^i)d\tilde x\leq \epsilon C_2\left(\int_{B_1(0)}|D\tilde v^i|^2d\tilde x\right)^{1/2}\!\!\!\!\!\!;\]
	thus, $\|D\tilde v^i\|^2_{L^2\big(B_1(0)\big)}\leq \epsilon C_2 \|D\tilde v^i\|_{L^2\big(B_1(0)\big)}$, i.e.  $\|D\tilde v^i\|_{L^2\big(B_1(0)\big)}\leq \epsilon C_2$. Since there exists also a constant $C_4\geq 0$ such that
	\[\|D\tilde v^i\|_{C^{0,\alpha}\big(B_\delta(0)\big)}\leq  C_4,\]
	by \cite[Lemma A.1]{LiiSal14}) we get that there exists $\delta'\in (0,\delta)$ such that
	$\|D\tilde v^i\|_{L^\infty\big(B_{\delta'}(0)\big)}=o(1)$, as $\epsilon\to 0$, for all $i\in\{0,\ldots, m\}$. Therefore, if $\tilde \Phi(\tilde x):=\big(\tilde u^1(\tilde x), \ldots, \tilde u^m(\tilde x)\big)$, we have $\|D\tilde \Phi(0)-\mathrm{Id}\|=o(1)$, as $\epsilon\to 0$, and then  $D\Phi(0)=D\tilde \Phi(0)$  ($\Phi(x):=\big(u^1(x), \ldots, u^m(x)\big)$) is invertible, moreover up to considering a smaller $\delta'$, we have also that $Du^i(x)\neq 0$ for all $x\in     B_{\delta'}(0)$ and all $i\in\{1, \ldots, m\}$. Thus, from Proposition~\ref{higherreg},  $\Phi$ is a $C^{k-1,\alpha}$ (resp. $C^\infty$; analytic) diffeomorphism on $B_{\delta'}(0)$ and for $V_p:=\phi^{-1}(B_{\delta'}(0))$, we have that  $\Psi:=\Phi\circ \phi|V$ is a $C^{k-1, \alpha}$ (resp. $C^\infty$; analytic) diffeomorphism whose components $u^i\circ\phi$ are harmonic. 
\end{proof}
Harmonic coordinates were successfully used  by M. Taylor \cite{Taylor06} in a new  proof of the Myers-Steenrod theorem about regularity of isometries between  Riemannian manifolds (including the case when the metrics are only H\"older continuous). In the Finsler setting, Myers-Steenrod theorem has been obtained with different methods in \cite{DenHou02, AraKer14} for smooth (and strongly convex) Finsler metrics and in \cite{MatTro17} for H\"older continuos Finsler metrics. We show here that harmonic coordinates can be used to prove the Myers-Steenrod  in the Finsler setting too, provided that the Finsler metrics are smooth enough.

Let $(M_1, F_1,\mu_1)$ and   $(M_2, F_2,\mu_2)$ be two oriented Finsler manifold of the same dimension $m$ endowed with the volume forms $\mu_1$ and $\mu_2$. 
Let us assume that 
$\mu_1$ and $\mu_2$ are locally Lipschitz  with locally bounded  differential meaning that in the local expressions of $\mu_1$ and $\mu_2$, $\mu_1=\sigma_1dx^1\wedge\ldots\wedge d x^m$, $\mu_2=\sigma_2dx^1\wedge\ldots\wedge d  x^m$,  $\sigma_1$ and $\sigma_2$ are Lipschitz functions and their derivatives (which are defined a.e. by Rademacher's theorem),  $\partial_{x^i}\sigma_1$ and $\partial_{x^i}\sigma_2$, for each $i\in\{1,\ldots, m\}$,  are  $L^\infty$ functions. Let $d_i$, $i=1,2$, be the (non-symmetric) distances associated to $F_i$.  Let $\mathcal I:M_1\to M_2$ be a distance preserving bijection. Clearly,  $\mathcal I$ is an isometry  of the symmetric distance  $\tilde d_i(x_1,x_2):=d_i(x,y)+d_i(y,x)$ thus, in particular, it is a bi-Lipschitz map (i.e. it is Lipschitz with Lipschitz inverse) w.r.t. the distances $\tilde d_i$ and it is locally Lipschitz w.r.t. the distances associated to any Riemannian metric on $M_1$ and $M_2$. Hence, $\mathcal I$ and its inverse are differentiable a.e. on $M_1$ and, respectively,  $M_2$.    In the next lemma we deal with  the relations existing between the Finsler metrics $F_1$ and $F_2$,  the inverse maps of their Legendre maps $\ell_1$ and $\ell_2$, their co-Finsler metrics $F^*_1$ and $F^*_2$, in presence of an isometry $\mathcal I$. For a fixed $x$ in $M_1$ or $M_2$, let us denote by $\mathcal J_{i, x}$, $i=1,2$, the diffeomorphisms between $T_xM_i$ and $T_x^*M_i$, given by $\mathcal J_{i,x}(v):=\frac 1 2\frac{\partial}{\partial v}F^2_i(x,v)[\cdot]$.
\begin{Lemma}\label{lemma}
	Let $\mathcal I$ be a distance preserving bijection between the Finsler manifolds $(M_1, F_1, \mu_1)$ and $(M_2, F_2,\mu_2)$. Then, for a.e. $x\in M_1$, we have:
	\begin{itemize}
		\item[(a)] $F_1=\mathcal I^*(F_2)$,  (i.e., $F_1(x,v)=F_2(\mathcal I(x), d\mathcal I(x)[v])$);
		\item[(b)] $\ell_1^{-1}(x,\omega)=\big(x,d\mathcal I^{-1}(\mathcal I(x))[\mathcal J^{-1}_{2,\mathcal I(x)}(\omega\circ d\mathcal I^{-1})]\big)$;
		\item[(c)] $F_1^*= \mathcal I^*(F^*_2)$,  (i.e., $F^*_1(x,\omega)=F^*_2\big(\mathcal I(x), \omega\circ d\mathcal I^{-1}\big)$).
	\end{itemize}	
\end{Lemma}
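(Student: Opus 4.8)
The plan is to prove (a) first, as the geometric heart, and then to obtain (b) and (c) by algebraic manipulation of the fibrewise (vertical) derivative, using the identities $\ell_i(x,v)=(x,\mathcal{J}_{i,x}(v))$ and $F_i^*=F_i\circ\ell_i^{-1}$ recalled above. The tool for (a) is the metric-derivative characterization of a Finsler norm: for $y\in M$, $w\in T_yM$, and any curve $\eta$ differentiable at $0$ with $\eta(0)=y$ and $\dot\eta(0)=w$, one has
\[\lim_{t\to 0^+}\frac{d(y,\eta(t))}{t}=F(y,w).\]
I would prove this by comparison with the coordinate segment $t\mapsto y+tw$ in a chart: the local bi-Lipschitz equivalence of $d$ with the Euclidean distance gives $|d(y,\eta(t))-d(y,y+tw)|\le C\,|\eta(t)-(y+tw)|=o(t)$, so it suffices to treat the segment. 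There the upper bound $d(y,y+tw)\le\int_0^1 F(y+stw,tw)\,ds=t\,F(y,w)+o(t)$ is immediate, while the lower bound $d(y,y+tw)\ge(1-\varepsilon)\,t\,F(y,w)$ follows from the convexity of $F(y,\cdot)$ (so that any curve from $y$ to $y+tw$ has $F(y,\cdot)$-length at least $F(y,tw)=t\,F(y,w)$) together with the uniform continuity of $F$ near $y$ (so that $F(z,\cdot)\ge(1-\varepsilon)F(y,\cdot)$ for $z$ in a small ball).

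Granting this formula, fix a point $x$ in the full-measure set where $\mathcal{I}$ is differentiable, $\mathcal{I}^{-1}$ is differentiable at $\mathcal{I}(x)$, and $A:=d\mathcal{I}(x)$ is invertible with $A^{-1}=d(\mathcal{I}^{-1})(\mathcal{I}(x))$ — a set of full measure since $\mathcal{I}$ is bi-Lipschitz, as observed before the statement. Put $y=\mathcal{I}(x)$ and let $v\in T_xM_1$. Taking $\sigma(t)=x+tv$ in a chart, the curve $\mathcal{I}\circ\sigma$ is differentiable at $0$ with velocity $Av$, so applying the metric-derivative formula in $M_1$ and in $M_2$ gives $d_1(x,\sigma(t))/t\to F_1(x,v)$ and $d_2(y,\mathcal{I}(\sigma(t)))/t\to F_2(y,Av)$. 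Since $\mathcal{I}$ preserves distances, $d_2(y,\mathcal{I}(\sigma(t)))=d_1(x,\sigma(t))$ for every $t$, whence $F_1(x,v)=F_2(\mathcal{I}(x),d\mathcal{I}(x)[v])$ for all $v$, which is (a).

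For (b), I would differentiate the identity $F_1^2(x,v)=F_2^2(y,Av)$ — which holds for all $v$ and is smooth in $v$ at fixed $x$, since $A$ is a fixed linear map — in $v$ along a direction $u$, obtaining $\mathcal{J}_{1,x}(v)[u]=\mathcal{J}_{2,y}(Av)[Au]$, that is $\mathcal{J}_{1,x}(v)=A^{*}\mathcal{J}_{2,y}(Av)$ with $A^{*}\xi:=\xi\circ A$. Setting $\omega=\mathcal{J}_{1,x}(v)$ and inverting $A^{*}$ yields $\mathcal{J}_{2,y}(Av)=\omega\circ A^{-1}$, hence $Av=\mathcal{J}_{2,y}^{-1}(\omega\circ A^{-1})$ and $v=A^{-1}\mathcal{J}_{2,y}^{-1}(\omega\circ A^{-1})$; since $\ell_1^{-1}(x,\omega)=(x,\mathcal{J}_{1,x}^{-1}(\omega))=(x,v)$, this is exactly (b). Finally (c) follows by combining the two parts through $F_1^*=F_1\circ\ell_1^{-1}$: writing $\ell_1^{-1}(x,\omega)=(x,v)$ we get $F_1^*(x,\omega)=F_1(x,v)$, which equals $F_2(y,Av)$ by (a), and since $Av=\mathcal{J}_{2,y}^{-1}(\omega\circ A^{-1})$ this is $F_2(\ell_2^{-1}(y,\omega\circ A^{-1}))=F_2^*(\mathcal{I}(x),\omega\circ d\mathcal{I}^{-1})$, proving (c).

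The main obstacle is the metric-derivative formula underlying (a). Its upper bound is routine, but the lower bound must be handled with care for a curve that is differentiable only at the single point $0$, and one must respect the non-symmetry of $d$ by taking the one-sided limit $t\to0^+$ matched to the positive homogeneity of $F$. A second, minor point is to confirm that a.e.\ differentiability of $\mathcal{I}$ suffices: fixing $x$ in the differentiability set makes (a) hold simultaneously for all $v$, which is precisely what legitimizes the fibrewise differentiation in (b).
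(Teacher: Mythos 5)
Your proof is correct and follows essentially the same route as the paper's: (a) is obtained from the metric-derivative characterization of a Finsler norm together with distance preservation, (b) by fibrewise (vertical) differentiation of the identity $F_1^2(x,v)=F_2^2(\mathcal I(x),d\mathcal I(x)[v])$, and (c) by composing $F_1^*=F_1\circ\ell_1^{-1}$ with (a) and (b), exactly as in the paper. The one place you go beyond the paper is in (a): the paper invokes the limit formula $F(x,v)=\lim_{t\to 0^+}\frac{1}{t}d(\gamma(0),\gamma(t))$ as well known for \emph{smooth} curves $\gamma$, whereas you correctly note that the curve $\mathcal I\circ\gamma$ is only differentiable at $t=0$ (and only for a.e.\ base point), so the formula must be established in the strengthened form for curves differentiable merely at $0$ --- your comparison-with-the-coordinate-segment argument supplies precisely that refinement and thereby closes a detail the paper glosses over.
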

\begin{proof}
	It is well-known that any Finsler metric $F$ on a manifold $M$ can be computed by using the associated  distance $d$ as 
	$F(x,v)=\displaystyle\lim_{t\to 0^+}\dfrac1 t d\big(\gamma(0), \gamma(t)\big)$, where $\gamma$  is a smooth curve on $M$ such that $\gamma(0)=x$ and $\dot\gamma(0)=v$; hence, (a) immediately follows from this property and the fact that $\mathcal I$ is a distance preserving map. From $(a)$, we get
	\bmln
	\mathcal J_{1,x}(v)=\frac 1 2 \frac{\partial}{\partial v}\Big(F^2_2\big(\mathcal I(x), d\mathcal I(x)[v]\big)\Big)=\frac 1 2\frac{\partial}{\partial w}F^2_2\big(\mathcal I(x), d\mathcal I(x)[v]\big)\big[d\mathcal I(x)[\cdot]\big]\\=\frac 1 2\mathcal J_{2,\mathcal I(x)}(d\mathcal I(x)[v])\big[d\mathcal I(x)[\cdot]\big],\emln 
	and then  we deduce $(b)$. Finally, $(c)$ follows from
	\bmln 
	F_1^*(x,\omega)=F_1\big(\ell_1^{-1}(x,\omega)\big)=F_2\Big(\mathcal I(x), d\mathcal I(x)\Big[d\mathcal I^{-1}(\mathcal I(x))[\mathcal J^{-1}_{2,\mathcal I(x)}(\omega\circ d\mathcal I^{-1})]\Big]\Big)\\=F_2\big(\mathcal I(x), \mathcal J^{-1}_{2,\mathcal I(x)}(\omega\circ d\mathcal I^{-1})\big)=F^*_2(\mathcal I(x), \omega\circ d\mathcal I^{-1})\emln
\end{proof}
\bpr\label{MyersSteenrod}
Let	$(M_1, F_1, \mu_1)$ and $(M_2, F_2,\mu_2)$ be two oriented Finsler manifolds, where  $\mu_1$ and $\mu_2$ are  locally Lipschitz with locally bounded differential (in the sense specified above) volume forms. Let $\mathcal I\colon M_1\to M_2$ be a distance preserving bijective map; if  $\mu_1=\mathcal I^*(\mu_2)$ (i.e. locally $\sigma_1=|\mathrm{Jac}(\mathcal I)|\sigma_2\circ\mathcal I $, where $\mathrm{Jac}(\mathcal I)$ is the Jacobian of $\mathcal I$) and $F_i$, $i=1,2$ are at least $C^3$ on $TM_i\setminus 0$, then $\mathcal I$ is a $C^{1}$ diffeomorphism.
\epr
\begin{proof}
	Being $\mathcal I$ a bi-Lipshitz map, it is enough to prove that $\mathcal I$ is  locally $C^1$ with locally $C^1$ inverse. Under the assumptions on $F_i$ and $\mu_i$, $i=1,2$, we have that   \eqref{uno}--\eqref{tre} hold and then, given an open relatively compact subset $\Omega_1\subset M_1$ with Lipschitz boundary, we have the existence of a minimum of the functional $E_1$ on $\{\varphi\}\times H^1_0(\Omega_1)$, for any $\varphi\in H^1(\Omega_1)$; clearly, the same existence of minima holds for $E_2$ on analogous  subsets $\Omega_2\subset\subset M_2$. Moreover, the same assumptions on $F_i$ and $\mu_i$ ensure that such  minima are  locally weak  harmonic and, arguing as in \cite[Th. 4.6 and Th. 4.9]{OhtStu09}, they are $H^2$ and  $C^{1,\alpha}$ on subsets $U_1\subset\subset \Omega_1$ (resp. $U_2\subset\subset \Omega_2$). Therefore, Theorem~\ref{main}, still holds under the assumptions of Proposition~\ref{MyersSteenrod}, and it provides $C^{1,\alpha}$ diffeomorphisms  $\Psi_i$, $i=1,2$, whose components  are harmonic. Let now $p\in M_1$ and $(U_2,\Psi_2)$ be a chart of harmonic coordinates of $(M_2, F_2,\mu_2)$ centred at $\mathcal I (p)$, $\Psi_2=(u_2^1,\ldots,u_2^m)$. Let us show that, for each $j\in\{1,\ldots,m\}\}$, $u_1^j:=u_2^j\circ\mathcal I$ is weakly harmonic on  $U_1=\phi^{-1}(U_2)$. First we notice that for any function $u\in H^1(U_2)$, $u\circ\mathcal I\in H^1(U_1)$, because $\mathcal I$ is Lipschitz; moreover, from $(c)$ of Lemma~\ref{lemma} and the change of variable formula for integrals under  bi-Lipschitz transformations, we have
	\bmln E_1 (u\circ \mathcal I|_{U_1})=
	\frac 12 \int_{U_1}F^{*2}_2\Big(\mathcal I(x), \big(du\circ d\mathcal I\circ d\mathcal I^{-1}\big)\big(I(x)\big)\Big)\mathcal I^*(d\mu_2)\\	=\frac 12 \int_{U_2}F^{*2}_2\big(x, du(x)\big)d\mu_2.\emln
	Hence, being $u^j_2$ a minimum of $E_2$ on $\{u^j_2\}\times H^1_0(U_2)$, we deduce that $u^j_1$ is a minimum 
	of $E_1$ on $\{u^j_1\}\times H^1_0(U_1)$ and then it is  a weakly harmonic function. Therefore, $\Psi_1:=(u_1^1,\ldots, u^m_1)$ is a $C^{1, \alpha}$ diffeomorphism and $\mathcal I|_{U_1}=\Psi_2^{-1}\circ\Psi_1$ and its inverse are both  $C^{1,\alpha}$ map as well.  
\end{proof}
\bere
If, for each $i\in\{1,2\}$, $F_i$ is  of class $C^{k+1}$ on $TM\setminus 0$, with $k\geq 3$, and $\mu_i$ is   of class $C^{k-1}$, as in  Theorem~\ref{main} (recall, in particular, \eqref{linear}),  we can deduce  that $\mathcal I$ is a $C^{k-1}$ diffeomorphism provided that $\mu_1$ and $\mu_2$ are related by $\mu_1=\mathcal I^*(\mu_2)$.
\ere

\section{Regularity results for Berwald metrics}\label{Berwald}
Let us recall the following result from \cite{DeTKaz81} which gives optimal  regularity  of a Riemannian metric in harmonic coordinates in connection with the regularity of the Ricci tensor (the meaning of ``optimal'' here is illustrated  in all its facets in \cite{DeTKaz81}).
\begin{Theorem}[Deturck - Kazdan]\label{detkaz}
	Let $h$ be a $C^2$ Riemannian metric  and 
	$\mathrm{Ric}(h)$ be its Ricci tensor.
	If in harmonic coordinates of $h$, $\mathrm{Ric}(h)$ is  of class $C^{k,\alpha}$, for $k\geq 0$, (resp. $C^\infty$; $C^\omega$) then in these coordinates $h$ is of class $C^{k+2,\alpha}$ (resp. $C^\infty$; $C^\omega$).
\end{Theorem}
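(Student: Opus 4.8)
The plan is to read off, from the simplified expression of the Ricci tensor in harmonic coordinates recalled in the Introduction, a quasi-linear elliptic system for the metric components and then to run a Schauder bootstrap. First I would record that, since $H^r\equiv 0$ in harmonic coordinates, the identity from \cite{DeTKaz81} reduces to
\[
\big(\mathrm{Ric}(h)\big)_{ij}=-\frac12\, h^{rs}\frac{\partial^2 h_{ij}}{\partial x^r\partial x^s}+Q_{ij}(h,\partial h),
\]
where the lower order terms $Q_{ij}$ depend only on the components $h_{kl}$ and on their first derivatives, polynomially in $\partial h$ with coefficients that are rational in the $h_{kl}$ (through the entries of the inverse $h^{rs}$). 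Reading this as an equation for the unknowns $h_{ij}$, the principal part is the single scalar operator $L:=h^{rs}\partial_r\partial_s$ acting diagonally on each component; since $(h^{rs})$ is the inverse of a positive definite matrix and $h$ is continuous, $L$ is uniformly elliptic on every relatively compact subset of the coordinate domain. The lower order terms couple the components, but only at order $\le 1$, so the whole thing is a diagonal elliptic system to which interior Schauder estimates for elliptic systems \cite{DouNir55, Morrey58} apply.

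The core of the argument is then an induction on the order of regularity, treating $-2\,\mathrm{Ric}_{ij}+2Q_{ij}$ as a given right-hand side. At the base step $h\in C^2$, so the coefficients $h^{rs}$ and the terms $Q_{ij}$ are at least $C^{0,\alpha}$, while $\mathrm{Ric}(h)\in C^{k,\alpha}\subset C^{0,\alpha}$ by hypothesis; hence the right-hand side lies in $C^{0,\alpha}$ and the interior Schauder estimate gives $h_{ij}\in C^{2,\alpha}_{\mathrm{loc}}$. Suppose inductively that $h\in C^{n,\alpha}$ with $2\le n\le k+1$. Then $h^{rs}\in C^{n,\alpha}$ and, since $Q_{ij}$ depends on $\partial h$, one has $Q_{ij}\in C^{n-1,\alpha}$; together with $\mathrm{Ric}(h)\in C^{\min(n-1,k),\alpha}$ this makes the right-hand side of class $C^{\min(n-1,k),\alpha}$, and a further application of the estimate upgrades $h$ to $C^{\min(n+1,k+2),\alpha}$. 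Iterating, the regularity of $h$ increases by one unit at each step until it stabilises exactly at $C^{k+2,\alpha}$: a further step still produces only $C^{k+2,\alpha}$, since the Ricci input caps the gain. This yields the stated conclusion in the H\"older case.

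If $\mathrm{Ric}(h)\in C^\infty$ the same bootstrap never stops and gives $h\in C^\infty$. In the analytic case the H\"older iteration only reaches $C^\infty$, so I would instead invoke the regularity theory for \emph{analytic} elliptic systems (Morrey's analyticity theorem): the displayed equation is elliptic and its dependence on $(x,h,\partial h,\partial^2 h)$ is real analytic — the principal part is linear with analytic coefficients $h^{rs}$ and the remainder $Q_{ij}$ is analytic in $(h,\partial h)$ through the rational dependence on the $h_{kl}$ — so once $h$ is known to be $C^\infty$ and $\mathrm{Ric}(h)$ is analytic, the metric is analytic in the given coordinates. The main obstacle is not any single estimate but the careful bookkeeping of the iteration: one must check that $Q_{ij}$ loses exactly one derivative relative to $h$, that uniform ellipticity persists on the shrinking interior domains produced by successive Schauder estimates, and that the scheme saturates precisely at $C^{k+2,\alpha}$ rather than overshooting. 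The genuinely delicate point is the analytic case, which lies outside the reach of the H\"older bootstrap and relies instead on the heavier machinery of analytic elliptic regularity; verifying that $h^{rs}$ and $Q_{ij}$ meet the analyticity hypotheses of that theory is where the care is required.
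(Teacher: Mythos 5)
The first thing to note is that the paper contains no proof of this statement: Theorem~\ref{detkaz} is recalled verbatim from DeTurck--Kazdan \cite{DeTKaz81} and used as a black box in the proof of Proposition~\ref{regBerwald}. So the only meaningful comparison is with the original argument in \cite{DeTKaz81}, and your proposal is essentially a faithful reconstruction of it: the reduction, via $H^r=0$, to a quasilinear system whose principal part $h^{rs}\partial_r\partial_s$ acts diagonally on the components $h_{ij}$, a Schauder bootstrap whose gain is capped by the regularity of $\mathrm{Ric}(h)$, and Morrey's theorem on analytic elliptic systems \cite{Morrey58} for the $C^\omega$ case. In outline this is exactly how the theorem is proved, and your bookkeeping of the induction (coefficients $h^{rs}\in C^{n,\alpha}$, $Q_{ij}\in C^{n-1,\alpha}$, right-hand side in $C^{\min(n-1,k),\alpha}$, saturation at $C^{k+2,\alpha}$) is sound.

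There is, however, one technical point that your base step glosses over, and it is precisely the point where DeTurck--Kazdan have to work. You assume that in the harmonic chart the components $h_{ij}$ are $C^2$, so that the equation $-\frac12 h^{rs}\partial^2_{rs}h_{ij}+Q_{ij}(h,\partial h)=\mathrm{Ric}_{ij}$ holds classically and interior Schauder estimates (which presuppose a $C^2$ solution) apply. But if the hypothesis ``$h$ is $C^2$'' refers to some original atlas --- which is how the theorem is actually used in Proposition~\ref{regBerwald}, where the averaged metric \eqref{riemannian} is shown to be $C^2$ in the smooth atlas of $M$ --- then the harmonic coordinate functions are in general only $C^{2,\beta}$ functions of the original coordinates, so in the harmonic chart the metric components are a priori only $C^{1,\beta}$, not $C^2$; the displayed equation then holds only in the weak (distributional) sense, and classical Schauder theory does not apply directly. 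The standard remedy, which is what the original proof supplies, is to run the first step with elliptic regularity for weak solutions with H\"older-continuous coefficients (or with $W^{2,p}$ estimates plus Sobolev embedding) to reach $h_{ij}\in C^{2,\alpha}_{\mathrm{loc}}$; from there your classical iteration, the cap at $C^{k+2,\alpha}$, and the analytic endgame via Morrey all proceed as you wrote them. If instead one insists on reading the hypothesis as ``$C^2$ in the harmonic coordinates themselves,'' your proof is complete, but the theorem so read would not quite cover the application made of it in Section~\ref{Berwald}.
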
	
Let us consider now  a Finsler manifold $(M,F)$ such that $F$ is $C^4$ on $TM\setminus 0$. A role similar to the one of the  Ricci tensor in the result above will be played by the {\em Riemann curvature} of $F$. This is a family of linear transformations of the tangent spaces defined in the following way (see \cite[p. 97]{Shen01}):
let $G^i(x,y)$, $y\in T_xM\setminus\{0\}$, $x\in M$   be the spray coefficients of $F$:
\[G^i(x,y):=\frac 1 4g^{ij}(x,y)\left(\frac{\partial^2F^2}{\partial x^k\partial y^j}(x,y)y^k-\frac{\partial F^2}{\partial x^j}(x,y)\right),\]
where $g^{ij}(x,y)$ are the components of the inverse of the matrix representing the fundamental tensor $g$ at the point $(x,y)\in TM\setminus 0$. 

Let
\bmln R^i_k(x,y):=2\frac{\partial G^i}{\partial x^k}(x,y)-y^m\frac{\partial^2 G^i}{\partial x^m\partial y^k}(x,y)\\+2G^m(x,y)\frac{\partial^2 G^i}{\partial y^m\partial y^k}(x,y)-\frac{\partial G^i}{\partial y^m}(x,y)\frac{\partial G^m}{\partial y^k}(x,y).\emln
As above we will omit the explicit dependence on $x$, by writing simply $R^i_k(y)$. The Riemann curvature of $F$ at $y\in T_xM\setminus \{0\}$ is then the linear map $\mathbf R_y:T_xM\to T_xM$ given by
$\mathbf R_y:=R^i_k(y)\partial_{x^i}\otimes dx^k$. It can be shown (see \cite[Eqs. (8.11)-(8.12)]{Shen01} that 
\[R^i_k(y)=R^i_{jkl}(y)y^jy^l,\] 
where $R^i_{jkl}$ are the components of the $hh$ part of the curvature $2$-forms of the Chern connection, which are equal, in natural local coordinate on $TM$, to
\beq R^i_{jkl}(y):=\frac{\delta \Gamma^i_{jl}}{\delta x^k}(y)-\frac{\delta \Gamma^i_{jk}}{\delta x^l}(y)+\Gamma^m_{jl}(y)\Gamma^i_{mk}(y) -\Gamma^m_{jk}(y)\Gamma^i_{ml}(y),\label{Rijkl}\eeq
$\Gamma^i_{jk}$ being  the components of the Chern connection and $\frac {\delta }{\delta x^i}$ be the vector field on $TM\setminus 0$ defined by $\frac {\delta }{\delta x^i}:=\frac{\partial}{\partial x^i}-N^m_i(y)\frac{\partial }{\partial y^m}$, where $N^m_i(y):=\frac{1}{2}\frac{\partial G^m}{\partial y^i}(y)$. 

Finally, let us introduce the {\em Finsler Ricci scalar} as the contraction of the Riemann curvature $R(y):=R^i_i(y)$ (see \cite[Eq. (6.10)]{Shen01}).

We recall that, if $F$ is the norm of a Riemannian metric $h$ (i.e. $F(y)=\sqrt{h(y,y)}$) then  the components of the Chern connection coincide with those of the Levi-Civita connection, so they do not depend on $y$ but only on $x\in M$,  and   the functions  $R^i_{jkl}$  are then equal to the components of the standard Riemannian curvature tensor of $h$.

Let us also recall  (see \cite[p. 85]{Shen01}) that to any Finsler manifold  $(M,F)$ we can associate a canonical  {\em covariant derivative} of a vector field $V$ on $M$ in the direction $y\in TM$ defined, in local coordinates, as
\[D_yV:=\left(dV^i(y)+V^j(x)N^i_j(y)\right)\partial_{x^i}|_x,\quad \quad x=\pi(y)\]
and extending it as $0$ if $y=0$.

There are several equivalent ways to introduce Berwald metrics (see e.g.   \cite{SzLoKe11});  we will say  that a Finsler metric is said {\em Berwald} if the nonlinear connection $N^i_j$ is actually  a linear connection on $M$; thus, $N^i_j(y)=\Gamma^i_{jk}(x)y^k$   (see \cite[prop. 10.2.1]{BaChSh00}), so that  the  components  of the Chern connection do not depend on $y$; from \eqref{Rijkl},  the same holds for the components of the Riemannian curvature tensor $R^i_{jkl}$.

From a result by Z. Szab\'o \cite{Szabo81}, we know that there exists a Riemannian metric $h$ such that its Levi-Civita connection is equal to the Chern connection of $(M,F)$.  Actually such a Riemannian metric is not unique and  different ways to construct one do exist; in particular a branch of these methods is based on averaging over the indicatrixes (or, equivalently, on the unit balls) of the Finsler metric $S_x:=\{y\in T_xM:F(y)=1\}$, $x\in M$   (see the nice review   \cite{Crampi14}); moreover, the fundamental tensor of $F$ can be used in this averaging procedure as shown first by  C. Vincze \cite{Vincze05} and then, in a slight different way, in \cite{Matvee09a} (based on  \cite{MaRaTZ09}) and in other manners also in   \cite{Crampi14}. In particular,  as described in \cite{Crampi14}, the Riemannian metric obtained in \cite{MaRaTZ09} is given, up to a constant conformal factor in the Berwald case, by 
\beq h_x(V_1,V_2):=\frac{\int_{S_x} g(x,y)[V_1,V_2]d\lambda}{\int_{S_x}d\lambda},\label{riemannian}\eeq
where $d\lambda$ denotes the measure induced on $S_x$ (seen as an hypersurface on $\R^m\cong T_xM$) by the Lebesgue measure on $\R^m$.

\bpr\label{regBerwald}
Let $(M,F)$ be a  Berwald  manifold such that $F$ is  a  $C^4$ function on $TM\setminus 0$. Assume that the Finsler Ricci scalar  $R$ of $F$ is  of class $C^{k,\alpha}$, $k\geq 0$, (resp. $C^\infty$; $C^\omega$)  on $TW\setminus 0$, for some open set $W\subset M$.  Then the Riemannian metric $h$ in \eqref{riemannian} is of class  $C^{k+2,\alpha}$ (resp. $C^\infty$; $C^\omega$) in a  system of harmonic coordinates $(U,\phi)$, $U\subset W$, of the same    metric. 
\epr
\begin{proof}
	Being $F$ of class $C^4$ on $TM\setminus 0$,  the partial derivatives of $g(x,y)$, up to the second order, exist on $TM\setminus 0$ and are continuous;  
	for each $y\in S_x$, $\frac{\partial F^2(y)}{\partial y}[y]=2F^2(y)\neq 0$, thus $1$ is a regular value of the function $F^2$ and  the indicatrix bundle $\{(x,y)\in TM: F(x,y)=1\}$ is a $C^4$ embedded hypersurface in $TM$.  Thus,  both the area  of $S_x$ and the numerator in \eqref{riemannian} are  $C^2$ in $x$ and then $h$ is a $C^2$ Riemannian metric on $M$.
	From  \eqref{Rijkl} and the fact that $F$ is Berwald, the components $R^i_{jkl}$ are  equal to the ones of the Riemannian curvature tensor of $h$ and then  we have \beq\label{rich}\mathrm{Ric}(h)_{\alpha\beta}(x)=R^m_{\alpha m\beta}(x)=\frac 1 2 \frac{\partial^2}{\partial y^\alpha\partial y^\beta}\left (R^m_{jml}(x)y^jy^l\right)=\frac 1 2 \frac{\partial^2 R}{\partial y^\alpha\partial y^\beta}(x,y),\eeq  
	moreover  $R$ is quadratic in the $y^j$ variables, i.e.  $R(x,y)=R^m_{jml}(x)y^jy^l$, and  its second vertical derivatives  $\frac{\partial^2 R}{\partial y^\alpha\partial y^\beta}$ being independent  of $y^j$,  are $C^{k,\alpha}$ (resp. $C^\infty$; $C^\omega$) functions on $W$.
	Thus, the result follows from \eqref{rich} and Theorem~\ref{detkaz}.	
\end{proof}
\bere
Clearly, an analogous result holds for any $C^2$ Riemannian metric such that its Levi-Civita connection is equal to the canonical connection of the Berwald metric as the Binet-Legendre metric in  \cite{MatTro12}.
\ere
\bere
Under the assumptions of Proposition~\ref{regBerwald}, we get that components of the Chern connection of the Berwald metric $F$ are $C^{k+1,\alpha}$ (resp. $C^\infty$; $C^\omega$) in harmonic coordinates of the metric $h$.  In particular the geodesic vector field 
$y^i\partial_{x^i}-\Gamma^i_{jk}y^jy^k\partial_{y^i}$ is $C^{k+1,\alpha}$ (resp. $C^\infty$; $C^\omega$) in the corresponding natural coordinate system of $TM$.
\ere
\bere\label{horizontal}
Other  notions of Finslerian  Laplacian, which take into account the geometry of the tangent bundle more than the nonlinear Finsler Laplacian, could be considered in trying to obtain some regularity result for the fundamental tensor without averaging. Natural candidates are the horizontal Laplacians studied  in \cite{BaoLac98} which in the Berwald case   are equal (up to a minus  sign) to 
\[\Delta_H f= \frac{1}{\sqrt g}\frac{\delta}{\delta x^i}\left(\sqrt{g}g^{ij}\frac{\delta f}{\delta x^j}\right),\]
where  $f$ is a  smooth function defined on some open subset of $TM$ and $\sqrt{g}:=\sqrt{\det g(x,y)}$. We notice that this is also the definition of the horizontal Laplacian of  any Finsler metric given \cite{DraLar89}; moreover  for a $C^2$ function $f:M\to \R$, $\Delta_Hf$ is equal to the $g$-trace of the Finslerian Hessian of $f$,
$\mathrm{Hess} f:=\nabla(df)$, where $\nabla$ is the Chern connection. In natural local coordinates of $TM$,  for each $(x,y)\in TM\setminus 0$ and all $u,v\in T_xM$,  $\mathrm{Hess}f(x,y)[u,v]$ is given     by $(\mathrm{Hess}f)_{ij}(x,y)u^iv^j=\frac{\partial^2f}{\partial x^i\partial x^j}(x)u^iv^j-\frac{\partial f}{\partial x^k}(x)\Gamma^k_{\,\,ij}(x,y)u^iv^j$ (see e.g. \cite{CaJaMa10}), hence the  $g$-trace of $\mathrm{Hess}f$ is equal to $g^{ij}(x,y)\frac{\partial^2f}{\partial x^i\partial x^j}(x)-\frac{\partial f}{\partial x^k}(x)\Gamma^k(x,y)$, where $ \Gamma^k:=g^{ij}\Gamma^k_{ij}$. This expression is equivalent to  
\beq\label{hessf}
g^{ij}(x,y)(\mathrm{Hess}f)_{ij} (x,y)=g^{ij}(x,y)\frac{\delta^2f}{\delta x^i\delta x^j}(x)-\frac{\delta f}{\delta x^k}(x)\Gamma^k(x,y)
\eeq
because $\frac{\delta f}{\delta x^k}=\frac{\partial f}{\partial x^k}$ for a function defined  on $M$.  For a $C^2$ function $f$ on $TM\setminus 0$,  taking into account that $\frac 12 g^{lm}\frac{\delta g_{lm}}{\delta x^i}=\Gamma^l_{li}$ and $g^{ij}\Gamma^l_{li}+	\frac{\delta g^{ij}}{\delta x^i}=-g^{pq}\Gamma^j_{pq}$, we have 
\baln \Delta_H f&=\frac 12g^{lm}\frac{\delta g_{lm}}{\delta x^i}g^{ij}  \frac{\delta f}{\delta x^j}+\frac{\delta g^{ij}}{\delta x^i}  \frac{\delta f}{\delta x^j}+
g^{ij}\frac{\delta^2f}{\delta x^i\delta x^j}\\
&= g^{ij}\Gamma^l_{li}\frac{\delta f}{\delta x^j}+	\frac{\delta g^{ij}}{\delta x^i}  \frac{\delta f}{\delta x^j}+
g^{ij}\frac{\delta^2f}{\delta x^i\delta x^j}\\
&=-g^{hk}\Gamma^j_{hk}\frac{\delta f}{\delta x^j}+
g^{ij}\frac{\delta^2f}{\delta x^i\delta x^j},	
\ealn
which coincides with \eqref{hessf} when $f$ is a  function defined on $M$.

In particular, if $f$ is a $\Delta_H$-harmonic coordinate on $M$ then  
\[0=\frac{1}{\sqrt g}\frac{\delta}{\delta x^i}\left(\sqrt{g}g^{ij}\right)=g^{hk}\Gamma^j_{hk}= \Gamma^j,\]
which is analogous to \eqref{gammah}.
Anyway,   ellipticity (and quasi diagonality),  that hold  in the Riemannian case for  the system $R_{ij}:=R^m_{imj}= A_{ij}$, where $ A_{ij}$ are $C^{k,\alpha}$ (resp. $C^\infty$; $C^\omega$) functions,  would be spoiled,  in $\Delta_H$-harmonic coordinates  of a Berwald metric, by the presence of second order terms of the type $g^{rs}N^l_s\frac{\partial^2 g_{ij}}{\partial x^r y^l}$.  
\ere


\end{document}